\newtheorem{thm}{Theorem}[section]
\newtheorem{cor}[thm]{Corollary}
\newtheorem{lem}[thm]{Lemma}
\newtheorem{prop}[thm]{Proposition}
\theoremstyle{definition}
\newtheorem{defn}[thm]{Definition}
\theoremstyle{remark}
\numberwithin{equation}{section}
\long\def\forget#1\forgotten{}
\newcommand{\mQ}{\mathbb{Q}}
\newcommand{\mS}{\mathcal{S}}
\newcommand{\mG}{\mathcal{G}}
\newcommand{\F}{\mathbb{F}}
\newcommand{\pst}{\widetilde{\psi}}
\newcommand\wh[1]{{\widehat{#1}}}
\newcommand{\ra}{\rightarrow}
\def\({\left(}
\def\){\right)}
\newcommand\oline[1] {{\overline{#1}}}
\newcommand\Hom{{\operatorname{Hom}}}
\renewcommand\Im{{\operatorname{Im}}}
\begin{document}

\title
{On semiabelian $p$-groups}%

\def\Tech{Department of Mathematics, Technion-Israel Institute of Technology, Haifa 32000, Israel}

\author{ Danny Neftin}
\address{\Tech}
\email{neftind@tx.technion.ac.il}%

\begin{abstract}
The family of semiabelian $p$-groups is the minimal family that contains $\{1\}$ and is closed under quotients and semidirect products with finite abelian $p$-groups. Kisilevsky and Sonn have solved the  minimal ramification problem for a certain subfamily $\mG_p$ of the family of semiabelian $p$-groups. We show that $\mG_p$ is in fact the entire family of semiabelian $p$-groups and by this complete their solution to all semiabelian $p$-groups.
\end{abstract}

\maketitle

\section{introduction}

This paper is motivated by the minimal ramification problem for $p$-groups. Given a $p$-group $G$ it is an open problem to find the minimal number of primes ramified in a $G$-extension of $\mQ$ (see \cite{P}). As a consequence of Minkowski's Theorem this number is greater or equal to $d(G)$, the minimal number of generators of $G$. In \cite{KS}, Kisilevsky and Sonn  proved this number is exactly $d(G)$ for a family of $p$-groups denoted by  $\mG_p$ and defined as follows: 
\begin{defn} Let $\mG_p$ be the minimal family that satisfies:
\begin{enumerate}
\item any abelian $p$-group is in $\mG_p$,

\item if $H,G\in \mG_p$ then the standard wreath product $H\wr G$ is also in $\mG_p$,

\item if $G\in \mG_p$ and $G\ra \Gamma$ is a rank preserving epimorphism, i.e. with $d(G)=d(\Gamma)$,  then $\Gamma\in \mG_p$.
\end{enumerate}\end{defn}

This family is contained in the family of semiabelian groups (see \cite{KS}):

\begin{defn}
The family of {\it semiabelian} groups $\mS$ is the minimal family of groups that satisfies:
\begin{enumerate}
\item $\{1\}\in \mS$,

\item if $A$ is a finite abelian group and $H\in \mS$ acts on $A$ then the semidirect product $A\rtimes H$ is in $\mS$,

\item if $G\in \mS$ and $G\ra \Gamma$ is an epimorphism then $\Gamma\in \mS$.
\end{enumerate}
\end{defn}

We shall prove that $\mG_p$ is precisely the family of semiabelian $p$-groups. In fact this is an immediate corollary of the following theorem: \begin{thm}\label{main.thm} Let $G$ be a semiabelian $p$-group. Then there are abelian $p$-groups $A_1,A_2,..,A_r$ for which there is a rank preserving epimorphism $A_1\wr (A_2\wr ... \wr A_r)\ra G$. \end{thm}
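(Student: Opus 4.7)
The plan is to combine Corollary~\ref{what Jack has searched for} ($\mG_p=\mS_p$) with an induction on the $\mG_p$-construction of $G$ to produce the right-associated wreath of abelians. Two standard facts are used throughout: for $p$-groups one has $d(H\wr K)=d(H)+d(K)$ (since $(H\wr K)^{\mathrm{ab}}\cong H^{\mathrm{ab}}\times K^{\mathrm{ab}}$, the commutator subgroup of the semidirect $H^{|K|}\rtimes K$ absorbing the augmentation kernel of the summation $(H^{\mathrm{ab}})^{|K|}\to H^{\mathrm{ab}}$), and wreath products behave functorially with respect to rank-preserving epimorphisms --- a rank-preserving $H\to H'$ induces a rank-preserving $H\wr K\to H'\wr K$ coordinatewise on the base, and a rank-preserving $K\to K'$ induces a rank-preserving $A\wr K\to A\wr K'$ for $A$ abelian by summing base coordinates over fibres (well-defined and $K$-equivariant because $A$ is abelian).

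For $G$ abelian, take $r=1$ and $A_1=G$; for $G$ obtained as a rank-preserving quotient of some $G'\in\mG_p$ satisfying the conclusion, compose with the map given by induction. The nontrivial case is $G=H\wr K$ with $H,K\in\mG_p$: inductively one has right-associated wreaths $W_H=A_1\wr(A_2\wr\cdots\wr A_s)$ and $W_K=B_1\wr(B_2\wr\cdots\wr B_t)$ of abelian groups and rank-preserving epimorphisms $W_H\to H$, $W_K\to K$. Functoriality then yields a rank-preserving epimorphism $W_H\wr W_K\to H\wr K$, but $W_H\wr W_K$ is not right-associated; the task reduces to exhibiting a rank-preserving epimorphism onto $W_H\wr W_K$ from a right-associated wreath of abelians.

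The heart of the argument is a \emph{reassociation lemma}: for any abelian $p$-group $A$ and any $p$-groups $X,Y$, there is a rank-preserving epimorphism
$$A\wr(X\wr Y)\longrightarrow(A\wr X)\wr Y.$$
To produce it, observe that $X\wr Y$ acts regularly on itself and also acts transitively (imprimitively) on $X\times Y$, and the orbit map through a base point is an $X\wr Y$-equivariant surjection $X\wr Y\to X\times Y$. Since $A$ is abelian, summation over the fibres of this set map gives an $X\wr Y$-equivariant epimorphism of base groups $A^{|X\wr Y|}\to A^{|X|\cdot|Y|}$; identifying $(A\wr X)\wr Y\cong A^{|X|\cdot|Y|}\rtimes(X\wr Y)$ with the imprimitive action, the combined semidirect map is a group epimorphism, and both groups have rank $d(A)+d(X)+d(Y)$. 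Peeling off the layers $A_1,\dots,A_s$ of $W_H$ one at a time, alternating applications of the reassociation lemma with the abelian-outer-layer functoriality, converts $W_H\wr W_K=(A_1\wr(A_2\wr\cdots\wr A_s))\wr W_K$ into a rank-preserving quotient of the right-associated wreath $A_1\wr(A_2\wr(\cdots\wr(A_s\wr W_K)))$, and substituting the expansion of $W_K$ shows this is of the required form. Composition closes the induction.

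The main obstacle is the reassociation lemma. The conceptual picture is clean --- the right-associated wreath sits over the regular action tree while the left-associated wreath sits over its imprimitive quotient tree, and the fibre-sum uses abelianness of the base --- but verifying the equivariance of the fibre-sum map against the specific left/right shift convention used in the paper's definition of $H\wr G$ requires careful bookkeeping, and is the one step where one must genuinely compute.
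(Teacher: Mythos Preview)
Your route differs substantially from the paper's. The paper argues by induction on $d(G)$ directly via the minimal-decomposition machinery: a minimal decomposition $G=AH$ gives $d(H)<d(G)$ together with a rank-preserving epimorphism $A_m\wr H\to G$; by induction there is a right-associated abelian wreath $H'$ mapping onto $H$, and a single application of second-argument functoriality (the fibre-sum map, Lemma~\ref{functoriality with second argument}) with abelian base $A_m$ lifts this to $A_m\wr H'\to A_m\wr H\to G$. No reassociation is ever needed, because the abelian factor is always produced on the outside.

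Your structural induction on $\mG_p$ is sound in outline but has a gap at the sentence ``functoriality then yields a rank-preserving epimorphism $W_H\wr W_K\to H\wr K$''. Neither of the two functoriality facts you state covers this: first-argument functoriality gives $W_H\wr W_K\to H\wr W_K$, but the remaining step $H\wr W_K\to H\wr K$ is second-argument functoriality with a generally non-abelian base $H$, and the fibre-sum is only well-defined when the base is abelian (as you yourself note). The repair is to interleave the two reductions: in the peeling-off induction, aim for $(A_j\wr\cdots\wr A_s)\wr K$ rather than $(A_j\wr\cdots\wr A_s)\wr W_K$. The base case $j=s$ is then $A_s\wr W_K\to A_s\wr K$, which is second-argument with abelian base $A_s$; the inductive step uses second-argument functoriality with abelian base $A_j$ followed by your reassociation lemma exactly as before; and at the end one applies first-argument functoriality $W_H\wr K\to H\wr K$. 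With this rearrangement the argument goes through, though it is longer than the paper's and invokes $\mG_p=\mS_p$ (itself built on minimal decompositions) as a black box only to then need the extra reassociation lemma that the paper's direct approach avoids entirely.
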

By this we complete the solution of the minimal ramification problem for semiabelian $p$-groups.

I would like to thank my advisor Jack Sonn for his valuable advice and comments, for pointing out an error in the first draft of this paper and suggesting a way to correct it. I would also like to thank the referee for many helpful remarks that improved this paper.

\section{Properties of decompositions}

The family of semiabelian groups has appeared in many forms in problems that arise from field theory  
(e.g. geometric realizations \cite{Den},\cite{M},\cite{Sal}, generic extensions \cite{Sal} and the minimal ramification problem \cite{KS}).
The following notion of a decomposition is used in \cite{Den} to characterize semiabelian groups:
\begin{defn} Let $G$ be a non-trivial group. A {\it decomposition} of $G$ is an abelian normal subgroup $A\lhd G$ and a proper subgroup $H<G$ such that $G=AH$.
\end{defn}
Dentzer (\cite{Den}) showed that a non-trivial group $G$ is semiabelian if and only if there is a decomposition $G=AH$
where $H$ is semiabelian.

In the following lemma we summarize several properties of such decompositions which we shall use repeatedly.  Let $\Phi(G)$ denote the Frattini subgroup of a group $G$. Recall that if $G$ is a $p$-group then $\Phi(G)=G^p[G,G]$.
\begin{lem}\label{basic.lem} Let $G$ be a $p$-group with decomposition $G=AH$.
Let $\oline{A}=A/A^p[A,H]$, $\pi:A\ra \oline{A}$ the natural map and $M$ a minimal subgroup of $A$ for which $\pi(M)=\oline{A}$. Then:
\begin{enumerate}
\item $[A,H]$ is a subgroup of $A$ that is normal in $G$,
\item
\label{structure_of_fratini} $\Phi(G)=A^p[A,H]\Phi(H),$ 
\item  $\oline{A}$ is a non-trivial elementary abelian $p$-group and $d(\oline{A})=d(M)$.
\end{enumerate}
If we assume in addition that \begin{equation}\label{intersect.equ}A\cap H\subseteq A^p[A,H]\cap \Phi(H),\end{equation} then:
\begin{enumerate}
 \item[(4)] \label{small cap} there is an isomorphism:
$$\psi:G/\Phi(G)\cong \oline{A}\times (H/\Phi(H)),$$
that is given explicitly for all $a\in A,h\in H$ by: $$\psi(ah\Phi(G))=(aA^p[A,H],h\Phi(H)).$$
\item[(5)] \label{equality-d} $d(G)=d(\oline{A})+d(H)$.
 \end{enumerate}
\end{lem}
\begin{proof}
\begin{enumerate}
\item For $a\in A,h\in H$, we have $[a,h]=a^{-1}h^{-1}ah=a^{-1}a^h$ where $a^h:=h^{-1}ah\in A$ and hence $[A,H]\leq A$.

    Since $[A,H]$ is a subgroup of $A$ it is centralized by $A$. For $h,h'\in H$ and $a\in A$ we have $[a,h]^{h'}=[a^{h'},h^{h'}]\in [A,H]$ and hence $[A,H]^{h'}\subseteq [A,H]$. It follows that $[A,H]$ is normalized by $A$ and $H$ and hence is a normal subgroup of $G$.
\item Clearly $$A^p[A,H]\Phi(H)=A^p[A,H]H^p[H,H]\subseteq G^p[G,G]=\Phi(G).$$ To show the converse we prove that for $g_1=a_1h_1,g_2=a_2h_2\in G,$ where $a_1,a_2\in A$ and $h_1,h_2\in H$, the commutator $[g_1,g_2]=g_1^{-1}g_2^{-1}g_1g_2$ and $g_1^p$ are elements of $A^p[A,H]\Phi(H)$. We use the following identities:
$$ [x,yz]=[x,z][x,y]^z,$$
$$ [xy,z]=[x,z]^y[y,z],$$ where
$x^y=y^{-1}xy$. It follows that:
$$ [g_1,g_2]=[a_1h_1,g_2]=[a_1,g_2]^{h_1}[h_1,g_2].$$ Expanding the commutators on the right hand side we have: $$ [a_1,g_2]=[a_1,a_2h_2]=[a_1,h_2][a_1,a_2]^{h_2}=[a_1,h_2]\in [A,H],$$
$$ [h_1,g_2]=[h_1,a_2h_2]=[h_1,h_2][h_1,a_2]^{h_2}\in [H,H][A,H]^{h_2}.$$
Since by part (1), $[A,H]\lhd G$ we have:
$$ [g_1,g_2]=[a_1,h_2]^{h_1}[h_1,h_2][h_1,a_2]^{h_2}\in [A,H][H,H][A,H]=[A,H][H,H].$$
We therefore have $[G,G]\subseteq [A,H][H,H]$. To prove that $g_1^p\in A^p[A,H]\Phi(H)$ we use the equality:
$$ g_1^p=a_1^ph_1^p \text{ (mod [G,G])}.$$ It follows that  $g_1^p\in A^pH^p[G,G]\subseteq A^p[A,H]\Phi(H)$.
\item Since all $p$-powers of $A$ are in $A^p[A,H]$, $\oline{A}$ is an elementary abelian $p$-group. Assume on the contrary $\oline{A}=\{1\}$. Then $A$ is contained in $\Phi(G)$ and hence the equality $G=AH$ implies by \cite[Corollary 10.3.3]{H} that $G=H$. This contradicts the assumption that $H$ is a proper subgroup of $G$ as part of a decomposition.

Let us show that $d(\oline{A})=d(M)$.
Let $\oline{A}=\langle \oline{a}_1,\ldots,\oline{a}_d\rangle$ where $d:=d(\oline{A})$. Since $\pi(M)=\oline{A}$, $d(M)\geq d(\oline{A})$. Furthermore, $M$ contains elements $a_1,\ldots,a_d$ such that $\pi(a_i)=\oline{a}_i$ for $i=1,\ldots,d$. The subgroup $M':=\langle a_1,\ldots,a_d\rangle\leq M$ maps under $\pi$ onto $\oline{A}$ and hence by the minimality of $M$, $M=M'$. It follows that $d(M)=d$.

\item Let us first prove $G/\Phi(G)\cong \oline{A}\times H/\Phi(H)$ under the assumption $A\cap H=\{1\}$. In such case $G=A\rtimes H$. Since  by part (1) $A^p[A,H]\leq A$, we have $A^p[A,H]\cap \Phi(H)=\{1\}$ and hence part (2) shows $$\Phi(G)=A^p[A,H]\rtimes \Phi(H).$$ We therefore have:
    \begin{equation}\label{semidirect-dec.equ} G/\Phi(G)=\left(A\rtimes H\right)/\left(A^p[A,H]\rtimes \Phi(H)\right).\end{equation}

  As $A^p$ is characteristic in $A$ we have $A^p\lhd G$. Together with part (1) this implies that $A^p[A,H]$ is a normal subgroup of $G$. Therefore the right hand side of (\ref{semidirect-dec.equ}) is isomorphic to:
  \begin{equation*}\left(A\rtimes H/A^p[A,H]\rtimes\{1\}\right)/\left(A^p[A,H]\rtimes \Phi(H)/A^p[A,H]\rtimes\{1\}\right) \\ \end{equation*}
and hence to:
  \begin{equation}\label{semidirect-evloves.equ}
   ((A/A^p[A,H])\rtimes H)/(\{1\}\rtimes \Phi(H))=(\oline{A}\rtimes H)/(1\rtimes \Phi(H)).\end{equation}

Since $[A,H]=\{a^{-1}a^h|a\in A,h\in H\}$, $H$ acts trivially on $A/[A,H]$, and hence the actions in the semidirect products in (\ref{semidirect-evloves.equ}) are trivial. We therefore have an isomorphism: $$G/\Phi(G)\cong (\oline{A}\times H)/(\{1\}\times \Phi(H))\cong \oline{A}\times (H/\Phi(H)).$$

Let us prove the assertion without assuming further that $A\cap H=\{1\}$. Note that $A\cap H$ is a normal subgroup of $G$ and let $G_0=G/A\cap H, A_0=A/A\cap H, H_0=H/A\cap H$. Since $A_0\cap H_0=\{1\}$ we have $G_0=A_0\rtimes H_0$. In particular, the assertion holds for the decomposition $G_0=A_0H_0$. Thus,
\begin{equation}\label{semidirect-deduce.equ}G_0/\Phi(G_0)\cong \oline{A}_0\times H_0/\Phi(H_0),\end{equation} where $\oline{A}_0=A_0/A_0^p[A_0,H_0]$.
Since,
\begin{equation}\label{not-to-reg.equ} \begin{array}{l} \Phi(H_0)=H_0^p[H_0,H_0]=H^p[H,H]/A\cap H=\Phi(H)/A\cap H, \\
                    A_0^p[A_0,H_0]=A^p[A,H]/A\cap H,
                    \end{array}\end{equation}
we have:
$$ \begin{array}{l}
\oline{A}_0=(A/A\cap H)/(A^p[A,H]/A\cap H)\cong A/A^p[A,H]=\oline{A},\\
H_0/\Phi(H_0)=(H/A\cap H)/(\Phi(H)/A\cap H)\cong H/\Phi(H).
\end{array}$$
By part (2) we also have:
$$ \Phi(G_0)=A_0^p[A_0,H_0]\Phi(H_0)=A^p[A,H]\Phi(H)/A\cap H=\Phi(G)/A\cap H. $$
We therefore have:
$$ \begin{array}{cl} G/\Phi(G) & \cong (G/A\cap H)/(\Phi(G)/A\cap H)= G_0/\Phi(G_0) \\
& \cong \oline{A}_0\times (H_0/\Phi(H_0))\cong \oline{A}\times H/\Phi(H). \end{array}$$

Since in all of the above isomorphisms the coset of $a\in A$ (resp. $h\in H$) passes to the coset of $a$ (resp. $h$) in the image, the resulting isomorphism $$\psi:G/\phi(G)\ra \oline{A}\times H/\Phi(H)$$ is given for all $a\in A,h\in H$ by: $$\psi(ah\Phi(G))=(aA^p[A,H],h\Phi(H)).$$

 \item
By part (4) we have $d(G/\Phi(G))=d(\oline{A}\times (H/\Phi(H)))$. Since $\oline{A}$ and $H/\Phi(H)$ are $p$-groups one has $$d(\oline{A}\times (H/\Phi(H)))=d(\oline{A})+d(H/\Phi(H)).$$  Recall that $d(H)=d(H/\Phi(H))$ (see e.g., the Basis Theorem in \cite[\S 5.4]{BH}). We therefore have $d(G)=d(\oline{A})+d(H)$.
\end{enumerate}
\end{proof}

By iterating Lemma \ref{basic.lem}.(4) we have:
\begin{cor}\label{iterative.cor}
Let $H_0$ be a $p$-group. Let $H_1\geq H_2\geq \ldots \geq H_k$ and $A_1,\ldots,A_k$ be subgroups of $H_0$ such that for each $i=1,\ldots,k$, $H_{i-1}=A_{i}H_{i}$ is a decomposition and $A_{i}\cap H_{i}\subseteq A_{i}^p[A_i,H_i]\cap \Phi(H_i)$. Let $\oline{A}_i:=A_i/A_i^p[A_i,H_i]$. Then there is an isomorphism
\begin{equation*}\label{decomposition in sequence} \psi:H_0/\Phi(H_0) \cong \left( \prod_{i=1}^k \oline{A}_i\right) \times H_k/\Phi(H_k) \end{equation*}
such that for all  $a_i\in A_i,i=1,...,k,$ and $h\in H_k$:
\begin{equation}\label{explicit.equ}\psi(a_1\ldots a_kh\Phi(H_0))=(a_1A_1^p[A_1,H_1],...,a_kA_k^p[A_k,H_k],h\Phi(H_k)).\end{equation}
\end{cor}
Note that since $H_0=A_1\cdots A_kH_k,$  every element $x\in H_0$ can be written as $a_1...a_kh$, for some $a_i\in A_i,i=1,...,k, h\in H_k$ and hence (\ref{explicit.equ}) provides a description of $\psi$ for all elements of $H_0/\Phi(H_0)$.

\section{Minimal decompositions}

A key ingredient in our proof of Theorem \ref{main.thm} is to find a decomposition $G=AH$ such that $d(G)=d(\oline{A})+d(H)$, where $\oline{A}=A/A^p[A,H]$.
 We shall prove this is the case for minimal decompositions.
\begin{defn}  Let $G$ be a semiabelian group.  A {\it minimal decomposition} of $G$ consists of the following data.

(1) a minimal normal abelian subgroup $A\lhd G$ for which there is a semiabelian proper subgroup $H'$ of $G$ satisfying $G=AH'$,

(2) a minimal semiabelian subgroup $H\leq G$  for which $G=AH$ (for the same $A$ given in (1)).
\end{defn}
By Dentzer's result (\cite{Den}) any non-trivial semiabelian group $G$ has a decomposition and therefore also a minimal one.

In order to apply Lemma \ref{basic.lem}.(5), we prove that minimal decompositions satisfy (\ref{intersect.equ}).
\begin{prop} \label{minimal decomposition}Let $G$ be a semiabelian $p$-group with a minimal decomposition $G=AH$. Then
$A\cap H\subseteq A^p[A,H]\cap \Phi(H).$
\end{prop}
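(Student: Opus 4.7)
The plan is to prove the two containments $A\cap H\subseteq A^p[A,H]$ and $A\cap H\subseteq \Phi(H)$ separately, each by contradicting one of the two minimality conditions on the decomposition.

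For the first containment, I would argue from minimality of $A$. Suppose for contradiction that some $x\in A\cap H$ fails to lie in $A^p[A,H]$, so $\bar x\ne 1$ in $\bar A=A/A^p[A,H]$. Since $[A,H]\subseteq A^p[A,H]$, the induced $H$-action on $\bar A$ is trivial, and hence so is the whole $G$-action (since $A$ already acts trivially on itself). Extend $\bar x$ to an $\mathbb F_p$-basis $\bar x,\bar z_2,\ldots,\bar z_k$ of $\bar A$, lift each $\bar z_i$ to some $z_i\in A$, and set
\[
A' \;:=\; A^p[A,H]\cdot\langle z_2,\ldots,z_k\rangle^G,
\]
where $\langle\cdot\rangle^G$ denotes the normal closure in $G$. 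The triviality of the $G$-action on $\bar A$ forces the image of $\langle z_2,\ldots,z_k\rangle^G$ in $\bar A$ to be the proper subspace $\langle\bar z_2,\ldots,\bar z_k\rangle$, so $A'\subsetneq A$. Clearly $A'$ is normal abelian in $G$, and since $A\subseteq A'\langle x\rangle$ (by the construction of $A_m=\langle x,z_2,\ldots,z_k\rangle$) and $x\in H$, one obtains $G=AH=A'\langle x\rangle H=A'H$. The decomposition $G=A'H$ with $A'\subsetneq A$ and the same proper semiabelian $H$ contradicts the minimality of $A$.

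For the second containment, I would use the minimality of $H$. Suppose $x\in(A\cap H)\setminus\Phi(H)$. By Burnside's basis theorem, there is a maximal subgroup $M<H$ with $x\notin M$. Since $H$ is a $p$-group, $M\lhd H$ has index $p$ and $H=\langle x\rangle M$, whence (using $x\in A$)
\[
G \;=\; AH \;=\; A\langle x\rangle M \;=\; AM.
\]
If $M$ is semiabelian, then $M<H$ is a proper semiabelian subgroup of $G$ with $G=AM$, contradicting the minimality of $H$.

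The main obstacle is therefore the sub-lemma that a maximal subgroup of a semiabelian $p$-group is semiabelian. I would prove this by induction on $|H|$: choose a Dentzer decomposition $H=BK$ with $B\lhd H$ abelian and $K<H$ proper semiabelian. If either $B\subseteq M$ or $K\subseteq M$, Dedekind's modular law directly writes $M$ as the product of an abelian normal subgroup with a proper semiabelian subgroup (the latter being semiabelian by the inductive hypothesis). The delicate case is when neither $B$ nor $K$ lies in $M$; then both $B$ and $K$ surject onto $H/M\cong\mC_p$, and one must extract a bridging element of the form $bk^{-j}\in M$ (with $b\in B\setminus M$, $k\in K\setminus M$, and $bM=(kM)^j$) to assemble a valid decomposition of $M$ from $B\cap M$, the inductively semiabelian $K\cap M$, and this bridging element. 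Handling this last case cleanly is the hardest part of the argument.
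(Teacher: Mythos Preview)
Your treatment of $A\cap H\subseteq A^p[A,H]$ is correct and matches the paper's: both pick a complement in $\bar A$ to the line generated by the image of a hypothetical $x\in(A\cap H)\setminus A^p[A,H]$, pull it back to a proper normal abelian $A'\subsetneq A$, and check that $A'H=G$, contradicting the minimality of $A$.

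For $A\cap H\subseteq\Phi(H)$ your plan has a fatal gap: the sub-lemma ``every maximal subgroup of a semiabelian $p$-group is semiabelian'' is false. In a finite $p$-group every subgroup is linked to the ambient group by a chain in which each term is maximal in the next, so your sub-lemma would force \emph{every} subgroup of a semiabelian $p$-group to be semiabelian. But any finite $p$-group embeds, via Cayley's theorem and Kaloujnine's description of the Sylow $p$-subgroups of $S_{p^n}$, into an iterated wreath product $C_p\wr\cdots\wr C_p\in\mS'=\mS$; your sub-lemma would then make all $p$-groups semiabelian, contradicting the fact that $\mS_p$ is a proper subclass of $\mP$. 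The ``delicate case'' you isolate (neither $B$ nor $K$ contained in $M$) is thus not merely awkward to write down---it is precisely where the claim breaks, and no bridging-element construction can repair it.

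The paper's route to $A\cap H\subseteq\Phi(H)$ is genuinely different and does not rely on any closure of $\mS_p$ under subgroups. It argues by induction on $|G|$: starting from $H_1=H$ one takes a chain of \emph{minimal} decompositions $H_i=A_{i+1}H_{i+1}$, to each of which the induction hypothesis applies, yielding a direct-product splitting of $H/\Phi(H)$ into the factors $\overline{A_i}$ and $H_k/\Phi(H_k)$. Locating the first $k$ with $\overline a\notin\pi_H(H_k)$, one shrinks $A_k$ to a proper $H_{k-1}$-normal $A_k'$, sets $U_{k-1}=A_k'H_k$, and rebuilds upward via $U_i=A_{i+1}U_{i+1}$. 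Each $U_i$ is semiabelian \emph{by construction} (abelian-normal times semiabelian) and is proper in $H_i$ by the minimality of $A_{i+1}$; finally $AU_1=G$, contradicting the minimality of $H$. The proper semiabelian replacement for $H$ is thus manufactured from the chain of minimal decompositions, not obtained as an arbitrary maximal subgroup.
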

\begin{proof}
We divide the proof into two parts: (1) $A\cap H\subseteq A^p[A,H]$, (2) $A\cap H\subseteq \Phi(H)$.

\begin{enumerate}\item  Let $\pi_A:A\ra \oline{A}$ where $\oline{A}:=A/A^p[A,H]$. Assume, on the contrary, that there is an $a\in A\cap H$ with non-trivial  image $\oline{a}:=\pi_A(a)$. By Lemma \ref{basic.lem}.(3), $\oline{A}$ is an elementary abelian $p$-group, and hence can be viewed as an $\F_p$-vector space.
We can choose an $\F_p$-subspace  $B_1 \subseteq \oline{A}$ such that $\oline{A}=\langle \oline{a}\rangle\oplus B_1$.

The group $A_1:=\pi_A^{-1}(B_1)$ is a proper subgroup of $A$. By definition of $\oline{A}$, $H$ acts trivially by conjugation on $\oline{A}$. Thus, as a preimage of an $H$-invariant group,  $A_1$ is $H$-invariant and hence a normal subgroup  of $G$.

We also have $\pi_A(A_1)\pi_A(A\cap H)=\oline{A}$ and therefore $$A_1(A\cap H)A^p[A,H]=A.$$ Since $A_1\supseteq A^p[A,H]$ this implies that $A_1(A\cap H)=A$ and therefore that $$A_1H=A_1(A\cap H)H=AH=G.$$ Thus, $A_1$ is a proper subgroup of $A$ that is normal in $G$ such that $G=A_1H$, contradicting the minimality of $A$.

\item Let us show that $A\cap H\subseteq \Phi(H)$ by induction on $|G|$. Assume that for any semiabelian group $G_0$ with $|G_0|<|G|$ and any minimal decomposition $G_0=BK$, we have $B\cap K\subseteq \Phi(K)$.

 Let $\pi_H:H\ra H/\Phi(H)$ and assume on the contrary there is an $a\in A\cap H$ for which $\wh{a}:=\pi_H(a)$ is non-trivial. Let $H_1=H$ and $H_{i}=A_{i+1}H_{i+1}$, $i=1,2,...,k-1,$ be a sequence of minimal decompositions such that $H_k$ is the first for which $\wh{a}\not\in \pi_H(H_k)$.

 Let $\oline{A}_i=A_i/A_i^p[A_i,H_i]$ and $\pi_i:A_i\ra \oline{A}_i$.
 By the induction hypothesis and part 1, $A_i\cap H_i\subseteq A_i^p[A_i,H_i]\cap \phi(H_i)$, for $i=2,\ldots,k.$ Thus, we can apply Corollary \ref{iterative.cor} and obtain an isomorphism
 \begin{equation*} \psi:H/\Phi(H) \cong \left( \prod_{i=2}^k \oline{A}_i\right) \times H_k/\Phi(H_k). \end{equation*} such that for all  $a_i\in A_i,i=2,...,k,h\in H_k$:
 \begin{equation}\label{explicit2.equ}\psi(\pi_H(a_2\ldots a_kh))=(\pi_2(a_2),...,\pi_k(a_k),\pi_H(h)).\end{equation}
 As  $H_{k-1}=A_kH_k$,  (3.1) implies:
\begin{equation}\label{image.equ}\begin{array}{l}
  \psi(\pi_H(H_{k-1}))=  \{1\}^{k-2}\times \oline{A}_k \times H_k/\Phi(H_k), \\
   \psi(\pi_H(H_{k}))=   \{1\}^{k-1} \times H_k/\Phi(H_k).
\end{array}\end{equation}
Write $a=a_2a_3...a_kh$ for $a_i\in A_i$ and $h\in H_k$, $i=2,...,k$. Since $\wh{a}\in \pi_H(H_{k-1})\setminus \pi_H(H_{k})$,
  (\ref{image.equ}) implies:
\begin{equation*} \psi(\wh{a})
\in \left(\{1\}^{k-2}\times \oline{A}_k\times H_k/\Phi(H_k)\right)\setminus \left(\{1\}^{k-1}\times H_k/\Phi(H_k)\right),  \end{equation*}
and hence by (\ref{explicit2.equ}), $\pi_k(a_k)\not=1$.

Let $\pi_k(a_k),x_1,..,x_r$ be a basis of $\oline{A}_k$ and let $A_k':=\pi_k^{-1}(\langle x_1,...,x_r\rangle)$. Then $A_k'$ is a proper subgroup of $A_k$ which is normal in $H_{k-1}$.
Since $\langle \pi_k(a_k)\rangle \pi_k(A_k')=\oline{A}_k$ and as $A_k'\supseteq A_k^p[A_k,H_k]$, we have $\langle a_k\rangle A_k'=A_k$.

The group $U_{k-1}:=A_k'H_k$ is a semiabelian subgroup of $H_{k-1}$. Since $A_k'$ is a proper subgroup of $A_k$ and $A_k$ is minimal we deduce that $U_{k-1}$ is a proper subgroup of $H_{k-1}$. Iteratively, define a semiabelian subgroup $U_i:=A_{i+1}U_{i+1}$ of $H_i$ for $i=1,..,k-2$. The decompositions $H_k=A_{i+1}H_{i+1}$ are minimal and hence each $U_i$ is a proper subgroup of $H_i$ for $i=1,...,k-1$.

We now claim that $AU_1=G$.  We have:
\begin{equation}\label{au1.equ} AU_1=AA_2...A_{k-1}A_k'H_k = AA_2...A_{k-1}H_kA_k'H_k.\end{equation}
Since \begin{equation*}
a_k=a_{k-1}^{-1}...a_2^{-1}ah^{-1}\in A_{k-1}...A_2AH_k=AA_2...A_{k-1}H_k,\end{equation*}
the right hand side of  (\ref{au1.equ}) contains: \begin{equation*} AA_2...A_{k-1}\langle a_k \rangle A_k' H_k=A_1...A_kH_k=G.
\end{equation*}
It follows that $G=AU_1$ which contradicts the minimality of $H_1=H$.
\end{enumerate}
\end{proof}

Combining Lemma \ref{basic.lem} and Proposition \ref{minimal decomposition} we have:
\begin{thm}\label{rank decomposition.thm} Let $G$ be a semiabelian $p$-group and $G=AH$ a minimal decomposition. Then $d(G)=d(\oline{A})+d(H)$. In particular $d(H)<d(G)$.
\end{thm}
\begin{proof} By Proposition \ref{minimal decomposition}, $A\cap H\subseteq A^p[A,H]\cap \Phi(H)$. By Lemma \ref{basic.lem}.(5) this implies $d(G)=d(\oline{A})+d(H)$.  By Lemma \ref{basic.lem}.(3), $\oline{A}$ is non-trivial and hence $d(H)<d(G)$.   \end{proof}

\section{Equality of families}

Let $G$ be a semiabelian $p$-group. We shall prove Theorem \ref{main.thm} in three steps. In Step I, we use a minimal decomposition $G=AH$ to construct a rank preserving epimorphism $\psi_1:A\rtimes H\ra G$. In Step II, we construct a subgroup $M\leq A$ and a rank preserving epimorphism $\psi_2:M\wr H \ra A\rtimes H$. In Step III, we prove Theorem~\ref{main.thm} by iterating steps I and II.

\subsection*{Step I} At first, we use  Theorem \ref{rank decomposition.thm} to prove the following Corollary:
\begin{cor}\label{rank preserving semidirect product} Let $G$ be a semiabelian $p$-group with a minimal decomposition $G=AH$. Let ${G_0}:=A\rtimes H$ with respect to the action induced by conjugation in $G$.
Then there is a rank-preserving epimorphism $\psi_1:{G_0} \ra G$. 
\end{cor}
\begin{proof} Let $\psi_1:{G_0} \ra G$ be defined by $\psi_1(a,h)=ah.$ It is an homomorphism since for all $a_i\in A,h_i\in H,i=1,2,$
$$ \psi_1(a_1,h_1)\psi_1(a_2,h_2) = a_1h_1a_2h_2=a_1a_2^{h_1^{-1}}h_1h_2=\psi_1((a_1,h_1)(a_2,h_2)).$$ Since $A$ and $H$ are in $\Im(\psi)$, $\psi$ is surjective. Let ${A_0}=A\rtimes 1,{H_0}=1\rtimes H\leq {G_0}$ and $\oline{A}_0=A_0/A_0^p[A_0,H_0]$. Since $${A_0}\cap {H_0}=\{1\}\subseteq {A_0}^p[{A_0},{H_0}]\cap \Phi({H_0}),$$ it follows from Lemma \ref{basic.lem}.(5) that  
$d({G_0})=d(\oline{A}_0)+d({H_0}).$ 
We have $H_0\cong H$ and since $$[{A_0},{H_0}]=\langle a^{-1}a^h | a\in A_0, h\in H_0\rangle\cong [A,H],$$ we also have $\oline{A}_0\cong \oline{A}$. In particular, $d(G_0)=d(\oline{A})+d(H)$. By Theorem \ref{rank decomposition.thm} $d(G)=d(\oline{A})+d(H)$ and hence $d({G_0})=d(G)$.
\end{proof}

\subsection*{Step II} We show that the group $G_0$ in Corollary \ref{rank preserving semidirect product} is a rank preserving epimorphic image of a corresponding wreath product. We first recall the following definition:
\begin{defn} Let $H$ be a group and $A$ an abelian group.
\begin{enumerate} \item Let $M_1^H(A)$ be the induced $H$-module, i.e. the abelian group of all functions $f:H\ra A$ with the $H$-action $f^h(x)=f(xh^{-1})$ for all $x,h\in H,f\in M_1^H(A)$.
\item For every $a\in A$, let $a_*\in M_1^H(A)$ be defined by $a_*(1)=a$ and $a_*(h)=1$ for $h\not=1$.
\item The wreath product $A\wr H$ is the semidirect product $M_1^H(A)\rtimes H$ with respect to the above $H$-action.
\end{enumerate}
\end{defn}
Note that for $a\in A,h\in H$, one has $a_*^h(x)=a$ if $x=h$ and $a_*^h(x)=1$ otherwise.

To compute the rank of wreath products we shall use the following well known lemma (see e.g. \cite{Y} or \cite{BK}).
\begin{lem}\label{degree.lem} Let $A$ and $H$ be $p$-groups and assume $A$ is abelian. Then $$d(A\wr H)=d(A)+d(H).$$
\end{lem}

\begin{prop}\label{wreath.prop} Let $G=A\rtimes H$, where $A$ and $H$ are $p$-groups and $A$ is abelian. Let $\oline{A}=A/A^p[A,H]$ and $\pi:A\ra\oline{A}$ the natural map. Let $M$ be a minimal subgroup of $A$ for which $\pi(M)=\oline{A}$. Then there is a rank preserving  epimorphism $$\psi_2:M\wr H\ra G.$$
\end{prop}
\begin{proof} Let $\psi_2$ be the epimorphism from $A\wr H$ to $G$ (see \cite[Lemma 16.4.3]{FJ}) defined for all $f\in M_1^H(A)$ and $h\in H$ by\footnote{In \cite{FJ}, $\psi_2$ was defined by $\psi_2(f,h)=\left(\prod_{\sigma \in H} f(\sigma)^{\sigma^{-1}}, h \right)$. The source of the difference is in the definition of $f^{\tau}$. In \cite{FJ}, $f^{\tau}(\sigma)=f(\tau\sigma)$.}:
$$ \psi_2(f,h)=\left(\prod_{\sigma \in H} f(\sigma)^{\sigma}, h \right).$$ We claim that the restriction of $\psi_2$ to $M\wr H$ remains surjective. As $\psi_2(1,h)=h$ for all $h\in H$, we have $H\subseteq \Im(\psi_2)$. Since for every $a\in M$, $\psi_2(a_*,1)=a$, we have $M\subseteq \Im(\psi_2)$.
As $\pi(M)=\oline{A}$ we have $MA^p[A,H]=A$ and hence $A^p[A,H]\langle M,H\rangle=G$. Since $A^p[A,H]$ is contained in $\Phi(G)$ this implies $\langle M,H\rangle=G$. It follows that $\Im(\psi_2)\supseteq G$ which proves the claim.
It remains to show that $d(M\wr H)=d(G)$. By Lemma \ref{degree.lem}, $d(M\wr H)=d(M)+d(H)$. By Lemma \ref{basic.lem}.(3), $d(M)=d(\oline{A})$. By Theorem \ref{rank decomposition.thm}, $d(G)=d(\oline{A})+d(H)$. It follows that:
$$ d(M\wr H)=d(M)+d(H)=d(\oline{A})+d(H)=d(G).$$
\end{proof}

\subsection*{Step III} Let $G$ be a semiabelian $p$-group. The composition $\psi_2\circ \psi_1$ gives a rank preserving epimorphism from $M\wr H\ra G$ where $M\leq G$ is abelian and $H< G$ is semiabelian. To prove Theorem \ref{main.thm} we iterate this process using the following well known lemma. For the sake of completeness we include a proof of this lemma.
\begin{lem}\label{functoriality with second argument}
Let $A$ be a finite abelian group and $\psi:G\ra \Gamma$ an epimorphism of finite groups.
Then there is an epimorphism $\pst:A\wr G\ra A\wr \Gamma$.
\end{lem}
\begin{proof}
We shall treat the $\Gamma$-module $M_1^\Gamma(A)$ as a $G$-module via the map $\psi$. By the Frobenius Reciprocity Theorem \begin{equation}\label{iso.equ} \Hom(A,M_1^\Gamma(A))\cong \Hom_G(M_1^G(A),M_1^\Gamma(A)),\end{equation} where $\Hom_G$ denotes the group of $G$-homomorphisms. The Isomorphism (\ref{iso.equ}) associates to a homomorphism $i:A\ra M_1^\Gamma(A)$, a homomorphism of $G$-modules \\$i^*:~M_1^G(A)\ra~M_1^\Gamma(A)$ that is given by:
\begin{equation*} i^*(f)=\prod_{g\in G} i(f(g))^{\psi(g)}. \end{equation*}
Let $i:A\ra M_1^\Gamma(A)$ be the homomorphism $i(a)=a_*. $ Then \begin{equation}\label{induced.equ} i^*(f)(\gamma)=\left(\prod_{g\in G}f(g)_*^{\psi(g)}\right)(\gamma)=\prod_{\{g\in G:\psi(g)=\gamma\}} f(g),\end{equation}
is the function that sums over the values of $f\in M_1^G(A)$ on the fiber $\psi^{-1}(\gamma)$.

We claim that $i^*$ is surjective. Let $f\in M_1^\Gamma(A)$. For every $\gamma\in \Gamma$, fix  an element $g_\gamma\in G$ for which $\psi(g_\gamma)=\gamma$ and define $\widetilde{f}=\prod_{\gamma\in \Gamma}f(\gamma)_*^{g_\gamma}\in M_1^G(A)$. In particular, $\widetilde{f}(g)=1$ for $g\not\in\{g_\gamma|\gamma\in \Gamma\}$ and $\widetilde{f}(g_\gamma)=f(\gamma)$.  
By (\ref{induced.equ}), $i^*(\widetilde{f})(\gamma)=\widetilde{f}(g_\gamma)=f(\gamma)$ for all $\gamma\in \Gamma$. It follows that for every $f\in M_1^\Gamma(A)$, $i^*(\widetilde{f})=f$, proving the claim.

Since $i^*:M_1^G(A)\ra M_1^\Gamma(A)$ is an epimorphism of $G$-modules, $i^*$ induces an epimorphism $\psi_2':M_1^G(A)\rtimes G\ra M_1^\Gamma(A)\rtimes G$. Since $\ker(\psi)\lhd G$ acts trivially on $M_1^\Gamma(A)$, $\ker(\psi)$ is a normal subgroup of $M_1^\Gamma(A)\rtimes G$. In particular, $\psi_2'$ induces an epimorphism $$\psi_2:A\wr G=M_1^G(A)\rtimes G\ra (M_1^\Gamma(A)\rtimes G)/\ker(\psi)\cong M_1^\Gamma(A)\rtimes \Gamma= A\wr \Gamma.$$  \end{proof}

\begin{proof}[Proof of Theorem \ref{main.thm}] We argue by induction on $|G|$. Let $G=AH$ be a minimal decomposition. By Corollary \ref{rank preserving semidirect product}, there is a rank preserving epimorphism $\psi_1:A\rtimes H\ra G$. Let $\oline{A}= A/A^p[A,H]$, $\pi:A\ra \oline{A}$ the natural map and $A_1$ a minimal subgroup of $A$ for which $\pi(A_1)=\oline{A}$. By Proposition \ref{wreath.prop} there is a rank preserving epimorphism $\psi_2:A_1\wr H\ra A\rtimes H$. Thus $\psi=\psi_2\circ\psi_1:A_1\wr H\ra G$ is a rank preserving epimorphism.

By the induction hypothesis there are abelian $p$-groups $A_2,\ldots, A_r$ and a rank preserving epimorphism $\phi_0: A_2 \wr (A_3 \wr\ldots \wr A_r)\ra H$. By Lemma \ref{functoriality with second argument}, $\phi_0$ induces an epimorphism $\phi: A_1\wr (A_2\wr \ldots\wr A_r)\ra A_1\wr H$. Using the equality $d(A_2\wr \ldots \wr A_r)=d(H)$ and Lemma \ref{degree.lem} we have:
$$ d(A_1\wr H)=d(A_1)+d(H)=d(A_1)+d(A_2\wr\ldots \wr A_r)=d(A_1\wr\ldots \wr A_r)$$ and hence $\phi$ is rank preserving. It follows that $\psi\circ\phi:A_1\wr (A_2\wr \ldots \wr A_r)\ra G$ is a rank preserving epimorphism.
\end{proof}

As a corollary we have:
\begin{cor} The families $\mG_p$ and $\mS_p$ are equal.
\end{cor}
\begin{proof}
The inclusion $\mG_p\subseteq \mS_p$ follows from \cite{KS}. For abelian $p$-groups $A_1,\ldots,A_r$ the iterated wreath product $A_1\wr (A_2\wr \ldots A_r)$ is in $\mG_p$. By Theorem \ref{main.thm}, every semiabelian group is a rank preserving epimorphism of such an iterated wreath product and hence in $\mG_p$. Thus, $\mG_p=\mS_p$. \end{proof}

\end{document}